\documentclass[12pt]{amsart}
\usepackage{amssymb}
\usepackage[all]{xypic}

%
%
%

\setlength{\textwidth}{6.5in}
\setlength{\oddsidemargin}{0in}
\setlength{\evensidemargin}{0in}
\setlength{\topmargin}{0in}
\setlength{\headheight}{0.50in}
\setlength{\headsep}{0.50in}
\setlength{\textheight}{8in}
\setlength{\footskip}{0.5in}

\setlength{\topskip}{0in}	


\theoremstyle{plain}
\newtheorem{theorem}{Theorem}[section]
\newtheorem{lemma}[theorem]{Lemma}

\theoremstyle{remark}

\newtheorem{remark}[theorem]{Remark}

\newtheorem*{note*}{Note}
\newtheorem*{remark*}{Remark}
\newtheorem*{example*}{Example}

\theoremstyle{definition}
\newtheorem*{definition*}{Definition}


\newcommand{\Z}{\mathbb{Z}}

\newcommand{\Q}{\mathbb{Q}}

\newcommand{\Gal}{\mathrm{Gal}}

\newcommand{\Cl}{\mathrm{Cl}}

\newcommand{\frakp}{\mathfrak{p}}

\newcommand{\K}{\mathrm{K}}


\title[On totally real Hilbert-Speiser Fields of type $C_{p}$]{On totally real Hilbert-Speiser Fields of type $C_{p}$}

\author{Cornelius Greither}
\address{Cornelius Greither\\
Fakult\"at f\"ur Informatik\\
Institut f\"ur theoretische Informatik und Mathematik\\
Universit\"at der Bundeswehr M\"unchen\\
85577 Neubiberg\\
Germany}
\email{cornelius.greither@unibw.de}

\author{Henri Johnston}
\address{Henri Johnston\\ 
St. Hugh's College\\
St. Margaret's Road\\
Oxford OX2 6LE\\
U.K.
}
\email{henri@maths.ox.ac.uk}
\urladdr{http://people.maths.ox.ac.uk/henri}

\thanks{Johnston was partially supported by a grant from the
 Deutscher Akademischer Austausch Dienst.}
\subjclass[2000]{Primary 11R33}
\keywords{Galois module structure, normal integral basis, Hilbert-Speiser field}
\date{24th February 2009}

\begin{document}

\maketitle

\begin{abstract} 
Let $G$ be a finite abelian group. A number field $K$ is called a
Hilbert-Speiser field of type $G$ if every tame $G$-Galois extension $L/K$
has a normal integral basis, i.e.,
the ring of integers $\mathcal{O}_L$ is free as an $\mathcal{O}_KG$-module. 
Let $C_{p}$ denote the cyclic group of prime order $p$. 
We show that if $p \geq 7$ (or $p=5$ and extra conditions are met) and $K$ is totally 
real with $K/\Q$ ramified at $p$, then $K$ is not Hilbert-Speiser of type $C_{p}$.
\end{abstract}

\section{Introduction} 

Let $L/K$ be a finite Galois extension of number fields with Galois group $G$.
Then by a theorem of Noether it is well known that the ring of integers 
$\mathcal{O}_{L}$ is a projective module over the group ring $\mathcal{O}_KG$ 
if and only if $L/K$ is tamely ramified. If $\mathcal{O}_{L}$ is in fact free (necessarily
of rank $1$) over $\mathcal{O}_KG$, then $L/K$ is said to have a 
\emph{normal integral basis}.

A number field $K$ is called a \emph{Hilbert-Speiser field} if every finite abelian tamely
ramified extension $L/K$ has a normal integral basis. The celebrated Hilbert-Speiser 
Theorem says that $\mathbb{Q}$ is such a field, and the main result of \cite{grrs} is that 
$\mathbb{Q}$ is in fact the only such field.  By fixing a finite abelian group $G$ one can consider a finer problem: given a number field $K$, does every tame $G$-Galois extension $L/K$ have a normal integral basis? If so, $K$ is said to be a \emph{Hilbert-Speiser field of type $G$}. The simplest case to consider is when $G=C_{p}$, the cyclic group of prime order $p$.
This has been studied, for instance, in  \cite{carter-NIB-quad-cubic}, \cite{carter-erratum},
\cite{herreng-HS}, \cite{ichimura-kummer-prime-V}, \cite{ichimura-NIB-rayclassgroups},
\cite{Ichimura-HS-p}, \cite{Ichimura-imag-quad} and \cite{Ichimura-Sumida-Takahashi-HS-imag-quad}. We continue the investigation of this case by establishing the following result, the proof of which is based on a detailed analysis of locally free class groups and ramification indices.

\begin{theorem}\label{real-not-HS}
Let $K$ be a totally real number field and let $p\geq5$ be prime.
Suppose that $K/\Q$ is ramified at $p$. If $p=5$ and $[K(\zeta_{5}):K]=2$, 
assume further that there exists a prime $\mathfrak{p}$ of $K$ above $p$ such that the
ramification index of $\mathfrak{p}$ in $K/\Q$ is at least $3$. Then $K$ is not 
Hilbert-Speiser of type $C_{p}$.
\end{theorem}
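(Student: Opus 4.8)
The plan is to construct an explicit tame $C_p$-extension $L/K$ for which $\mathcal{O}_L$ fails to be $\mathcal{O}_K C_p$-free; equivalently, to show the class $[\mathcal{O}_L]$ is nonzero in the locally free class group $\mathrm{Cl}(\mathcal{O}_K C_p)$. The backbone is the conductor square expressing $\mathcal{O}_K C_p$ as the pullback of $\mathcal{O}_K \to \mathcal{O}_K/p\mathcal{O}_K \leftarrow \mathcal{O}_K[\zeta_p]$, where $\mathcal{O}_K[\zeta_p] \to \mathcal{O}_K/p\mathcal{O}_K$ is $\zeta_p \mapsto 1$ (using $\sigma - 1$ and $1+\sigma+\cdots+\sigma^{p-1}$ as the two factors of $\sigma^p-1$); note that $\mathcal{O}_K[\zeta_p]$ is typically \emph{non-maximal}, precisely because $p$ ramifies in $K/\Q$. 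The resulting Mayer--Vietoris sequence
\[
 \mathcal{O}_K^{\times} \times \mathcal{O}_K[\zeta_p]^{\times} \longrightarrow (\mathcal{O}_K/p\mathcal{O}_K)^{\times} \longrightarrow \mathrm{Cl}(\mathcal{O}_K C_p) \longrightarrow \mathrm{Cl}(\mathcal{O}_K) \oplus \mathrm{Cl}(\mathcal{O}_K[\zeta_p]) \longrightarrow 0
\]
shows that $[\mathcal{O}_L]$ has trivial $\mathrm{Cl}(\mathcal{O}_K)$-component, has $\mathrm{Cl}(\mathcal{O}_K[\zeta_p])$-component $c_L := [\,\mathcal{O}_L \otimes_{\mathcal{O}_K C_p} \mathcal{O}_K[\zeta_p]\,]$, and that if $c_L = 0$ then $[\mathcal{O}_L]$ equals a ``local resolvent class'' $\rho_L$ in the finite group $\mathcal{U} := (\mathcal{O}_K/p\mathcal{O}_K)^{\times}/\overline{\mathcal{O}_K[\zeta_p]^{\times}}$, bars denoting images under $\zeta_p\mapsto 1$. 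Thus it suffices to find a tame $C_p$-extension $L/K$ with $c_L = 0$ and $\rho_L \neq 0$; essentially this is the G\'omez-Ayala-type criterion for tame prime-degree extensions, re-read through the square.

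To produce candidates I would, for a prime $\mathfrak{q}$ of $K$ with $\mathfrak{q}\nmid p$ and $\mathrm{Norm}(\mathfrak{q}) \equiv 1 \pmod p$, take $L/K$ to be the $C_p$-subextension of the ray class field of conductor $\mathfrak{q}$: this is tame, ramified only at $\mathfrak{q}$. Passing to $K(\zeta_p)$ and applying Kummer theory, $c_L$ is identified with a Stickelberger class of a prime $\mathfrak{Q}\mid\mathfrak{q}$ of $K(\zeta_p)$, and $\rho_L$ with the class in $\mathcal{U}$ of a Gauss sum $\tau_{\mathfrak{q}}(\chi)$ reduced modulo $p$; this reduction is made explicit by Stickelberger's factorisation together with the classical congruences for Gauss sums (Gross--Koblitz). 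Choosing $\mathfrak{q}$ by Chebotarev so as to prescribe its Frobenius in the Hilbert class field of $K(\zeta_p)$ and, independently, its behaviour in the relevant $p$-ramified tower, one aims to kill $c_L$ while steering $\tau_{\mathfrak{q}}(\chi)\bmod p$ into a nonzero coset of $\mathcal{U}$.

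The main obstacle is two-fold. First, one must know $\mathcal{U}$ is nontrivial: its pro-$p$ part is the wild part $W = \prod_{\mathfrak{p}\mid p}(1+\mathfrak{p}\mathcal{O}_{K,\mathfrak{p}})/(1+\mathfrak{p}^{e_{\mathfrak{p}}}\mathcal{O}_{K,\mathfrak{p}})$, of order $p^{\,[K:\Q]-\sum_{\mathfrak{p}\mid p} f_{\mathfrak{p}}}$, which is $>1$ exactly because $K/\Q$ ramifies at $p$; but one must bound the image in $W$ of the global units $\mathcal{O}_K[\zeta_p]^{\times}$ (the cyclotomic units $1+\zeta_p+\cdots+\zeta_p^{a-1}\mapsto a$ and $\overline{\mathcal{O}_K^{\times}}$ being the visible contributions), and for $K$ \emph{totally real} the ``minus part'' argument of \cite{grrs} is unavailable, so this reduces to an explicit index/dimension computation in which the $\Z$-rank $(p-1)/2$ being at least $3$, i.e.\ $p\geq 7$, is the clean regime. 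Second, one must ensure the Gauss-sum/Stickelberger image genuinely meets a nonzero coset of $\mathcal{U}$ \emph{after} $c_L$ is annihilated, which rests on a linear-disjointness argument between the Hilbert class field of $K(\zeta_p)$ and the $p$-ramified data. For $p=5$ the rank $(p-1)/2$ equals $2$ and the count is borderline: it still succeeds when $[K(\zeta_5):K]=4$, but when $[K(\zeta_5):K]=2$ one needs an extra unit of wild ramification, i.e.\ some $\mathfrak{p}\mid p$ with $e_{\mathfrak{p}}\geq 3$, to enlarge $W$ sufficiently --- which is exactly the hypothesis in the statement.
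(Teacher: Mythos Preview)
This is a plan rather than a proof --- the two obstacles you name are stated but not overcome --- and the route diverges substantially from the paper's; moreover, one of your key intuitions is inverted.

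The paper does not construct an explicit $L$. Via McCulloh's identity $R(\mathcal{O}_K C_p)=\Cl'(\mathcal{O}_K C_p)^{\mathcal{J}}$ it suffices to show $\Cl(\Gamma)^{\mathcal{J}}\neq 0$ for $\Gamma=\mathcal{O}_K[\zeta_p]$. Rather than your square $\mathcal{O}_K C_p=\mathcal{O}_K\times_{\mathcal{O}_K/p}\Gamma$, the paper \emph{enlarges} $\Gamma$ to an order $S$ by locally adjoining $x=\lambda^{p-2}/\pi$ at a ramified $\mathfrak{p}\mid p$ (with $\lambda^{p-1}=-p$ and $\pi$ a uniformiser of $K_{\mathfrak{p}}$), and uses the Milnor square for $\Gamma\subset S$ to embed $N:=\bar{S}^{\times}/(\bar{\Gamma}^{\times}\cdot\mathrm{im}\,S^{\times})$ into $\Cl(\Gamma)$, where $\bar{S}=S/\pi S$. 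The truncated exponential $y=[\exp](\bar{x})$ gives a nontrivial $p$-torsion class in $N^{\omega^{-1}}$, and since $\omega^{-1}(\mathcal{J}_p)=\Z_p\cdot B_{1,\omega}$ with $B_{1,\omega}\equiv 1/12\pmod p$, one gets $\Cl(\Gamma)^{\mathcal{J}}\neq 0$.

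Your assertion that ``the minus-part argument is unavailable for $K$ totally real'' is backwards, and this is where your plan stalls. Because $K$ is totally real, $K(\zeta_p)$ is CM, so $(\mathcal{O}_{K(\zeta_p)}^{\times})^{-}$ consists only of roots of unity; this is exactly what the paper exploits to show $(S^{\times})^{\omega^{-1}}$ is trivial (when $[K(\zeta_p):K]>2$) or at worst cyclic, so global units cannot kill $y$. In your setup, by contrast, the target $(\mathcal{O}_K/p)^{\times}$ carries trivial $\Delta$-action, so no eigenspace cut is available and you must bound the full image of $\mathcal{O}_K[\zeta_p]^{\times}$ by a raw dimension count you do not carry out --- and with $K$ totally real the unit rank is large, so this is not a formality. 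Finally, your rationale for the $p=5$ exception via the rank $(p-1)/2$ does not match the actual mechanism: the paper's case split is governed by $[K(\zeta_p):K]$ and $e$, and the extra hypothesis when $p=5$, $[K(\zeta_5):K]=2$ arises because then $\omega^{-1}|_{\Delta_0}=\omega|_{\Delta_0}$, so the roots of unity may contribute to the $\omega^{-1}$-part, forcing the construction of a second independent element $x_2=\lambda^{p-2}/\pi^2$ and hence a larger $e$ (with $e=2$ genuinely failing, as $\Q(\sqrt{5})$ witnesses).
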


\begin{remark}
Some extra conditions in the case $p=5$ and $[K(\zeta_{5}):K]=2$  are required
because, for example, as noted in \cite[Remark 1]{Ichimura-HS-p}, $K=\Q(\sqrt{5})$ is in fact Hilbert-Speiser of type $C_{5}$.
\end{remark}

Theorem \ref{real-not-HS} can be seen as an analogue of the following result of Herreng
(see \cite[\S 3]{herreng-HS}). The authors are grateful to Nigel P. Byott for pointing out that 
the original hypothesis that $K/\Q$ is Galois can be weakened as below.

\begin{theorem}[Herreng]\label{imag-not-HS}
Let $K$ be a totally imaginary number field and let $p$ be an odd prime.
Suppose that every prime $\frakp$ of $K$ above $p$ is ramified. If
\begin{enumerate}
\item $p > [K:\Q]$, or
\item $p \geq 5$ and $\zeta_{p} \in K$, or
\item $p \geq 7$ and the ramification index in $K/\Q$ of every prime $\frakp$ of $K$ above $p$
is at least $3$,
\end{enumerate}
then $K$ is not Hilbert-Speiser of type $C_{p}$.
\end{theorem}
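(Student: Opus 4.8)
The plan is to show that, under the hypotheses, $\mathrm{Cl}(\mathcal{O}_K C_p)$ contains a nontrivial \emph{realizable} class, i.e.\ a class of the form $[\mathcal{O}_L]$ for some tame $C_p$-extension $L/K$; since for tame $L/K$ the module $\mathcal{O}_L$ is locally free over $\mathcal{O}_K C_p$ and is free precisely when this class vanishes, producing one such nonzero class is exactly saying that $K$ is not Hilbert--Speiser of type $C_p$. The tool for this is McCulloh's description of the set $R(\mathcal{O}_K C_p)$ of realizable classes: for the cyclic group $C_p$ it is a subgroup of $\mathrm{Cl}(\mathcal{O}_K C_p)$, described idelically in terms of Stickelberger elements, so it suffices to exhibit a single nonzero element of a subgroup known to consist of realizable classes.

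First I would recall the arithmetic of $\mathcal{O}_K C_p$. With $M=K(\zeta_p)$ and $d=[M:K]\mid p-1$, the étale $K$-algebra $KC_p$ splits as $K\times M^{(p-1)/d}$, and $\mathcal{O}_K C_p$ sits inside the maximal order $\mathcal{O}_K\times\mathcal{O}_M^{(p-1)/d}$ with conductor a power of $p$. The associated conductor (Milnor) square yields an exact sequence
$$T(\mathcal{O}_K C_p)\;\hookrightarrow\;\mathrm{Cl}(\mathcal{O}_K C_p)\;\twoheadrightarrow\;\mathrm{Cl}(\mathcal{O}_K)\oplus\mathrm{Cl}(\mathcal{O}_M)^{(p-1)/d},$$
where the "local-at-$p$" kernel $T(\mathcal{O}_K C_p)$ is a quotient of $(\mathcal{O}_K/p\mathcal{O}_K)^{\times}$ by the images of the unit groups of $\mathcal{O}_K$ and $\mathcal{O}_M$ under the relevant reduction maps (with $\zeta_p\mapsto 1$ on the $M$-factors). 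I would then invoke the part of McCulloh's theorem that identifies which classes here are realizable: a tame $L/K$ ramified at a single auxiliary prime $\mathfrak l$ with $\mathrm{N}\mathfrak l\equiv 1\pmod p$ contributes the class of a Gauss sum $\tau(\mathfrak l)$, and by Stickelberger's theorem $\tau(\mathfrak l)^d$ generates an explicit power of a prime of $M$ above $\mathfrak l$; choosing $\mathfrak l$ so that this prime is principal places $[\mathcal{O}_L]$ in $T(\mathcal{O}_K C_p)$ and identifies it with a suitable reduction mod $p$. The upshot is to reduce the theorem to the purely arithmetic assertion that $T(\mathcal{O}_K C_p)$ is nontrivial (more precisely that the realizable part of it is nontrivial; this part still sees more than the identity because the cyclotomic units feed only into the prime-to-$p$ factor $\mathbb F_p^{\times}$).

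The quantitative heart is then an $\mathbb F_p$-rank count. On one side, $(\mathcal{O}_K/p\mathcal{O}_K)^{\times}\cong\prod_{\mathfrak p\mid p}(\mathcal{O}_K/\mathfrak p^{e_{\mathfrak p}})^{\times}$ has $p$-rank equal to $\sum_{\mathfrak p\mid p}f_{\mathfrak p}(e_{\mathfrak p}-1)$ when every $e_{\mathfrak p}\le p$, and is bounded below by essentially this quantity in general; since \emph{every} prime above $p$ is ramified we have $e_{\mathfrak p}\ge 2$ throughout, so this $p$-rank is at least $\frac12[K:\Q]$, and at least $\frac23[K:\Q]$ in case (c) where $e_{\mathfrak p}\ge 3$. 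On the other side the unit images have $\mathbb F_p$-rank controlled by Dirichlet: as $K$ is totally imaginary, $\mathrm{rank}\,\mathcal{O}_K^{\times}=\frac12[K:\Q]-1$, and the additional units entering the denominator come from the CM field $M$ and from cyclotomic units, all of bounded contribution. The three hypotheses each then force the gap open: (a) $p>[K:\Q]$ makes $K$ too small to carry enough units; (b) $\zeta_p\in K$ with $p\ge 5$ is a degenerate regime in which the sequence simplifies and the class-group computation is direct, and $p\ge 5$ is genuinely needed here, in the same spirit as the Remark above concerning the real case; (c) $e_{\mathfrak p}\ge 3$ together with $p\ge 7$ buys the extra room needed when $[K:\Q]$ is comparable to $p$.

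I expect the main obstacle to be precisely the local analysis at $p$: identifying exactly the image in $(\mathcal{O}_K/p\mathcal{O}_K)^{\times}$ of the global unit groups --- including the nuisance that $\mathcal{O}_K\otimes_{\Z}\Z[\zeta_p]$ need not be the maximal order $\mathcal{O}_M$, so one must work with $K_1$ of a non-maximal order and carry the conductor through the computation --- and then checking, case by case, that the rank inequality survives, in particular in the edge cases with large $e_{\mathfrak p}$, where $(\mathcal{O}_K/\mathfrak p^{e_{\mathfrak p}})^{\times}$ is no longer elementary abelian at $p$ and the lower bound on its $p$-rank must be re-examined. Once that bookkeeping is in place, the realizability input from McCulloh and the final conclusion are essentially formal.
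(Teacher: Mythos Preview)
The paper does not contain a proof of this theorem. Theorem~\ref{imag-not-HS} is explicitly attributed to Herreng and cited from \cite[\S 3]{herreng-HS}; the paper merely states it (noting that Byott observed the original Galois hypothesis on $K/\Q$ could be dropped) as motivation and as a counterpart to the paper's own Theorem~\ref{real-not-HS} on the totally real case. So there is no proof in the paper against which to compare your proposal.

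That said, your outline is broadly the shape of Herreng's original argument: pass via McCulloh's description of realizable classes, use the Milnor square for $\mathcal{O}_K C_p$ to isolate a local-at-$p$ piece of the class group, and then bound the image of global units using Dirichlet's theorem and the fact that $K$ is totally imaginary, so that an $\mathbb{F}_p$-rank comparison forces a nontrivial realizable class. Your sketch is honest about where the real work lies (the unit-image bookkeeping and the edge cases), though as written it is a plan rather than a proof.

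It may be worth noting that the paper's own method for the totally real analogue, Theorem~\ref{real-not-HS}, is rather different in flavour: instead of a rank count, it builds an explicit over-order $S\supseteq\Gamma$ locally at one ramified prime, uses a truncated exponential to produce units in the $\omega^{-1}$-eigenspace of $\bar S^{\times}$, and exploits that for $K$ totally real the minus part of the global units is just roots of unity. That eigenspace argument is what replaces the Dirichlet rank estimate available in the totally imaginary setting.
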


Combining Theorems \ref{real-not-HS} and \ref{imag-not-HS}(a) we immediately obtain the 
following result, which in many (but not all) respects is a significant sharpening of 
\cite[Theorems 1 and 2]{Ichimura-HS-p}.

\begin{theorem}
Let $K$ be a Hilbert-Speiser field of type $C_{p}$ for some odd prime $p$.
If either
\begin{enumerate}
\item $K$ is totally real and $p \geq 7$, or
\item $K$ is totally imaginary and $p > [K:\Q]$,
\end{enumerate}
then $K \cap \Q(\zeta_{p^{n}})=\Q$ for all $n \geq 1$.
\end{theorem}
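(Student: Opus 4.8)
The plan is to obtain this directly from Theorems \ref{real-not-HS} and \ref{imag-not-HS}(a) by contraposition, the only real content being a short ramification observation. Suppose, for contradiction, that $K \cap \Q(\zeta_{p^{n}}) \neq \Q$ for some $n \geq 1$, and set $F = K \cap \Q(\zeta_{p^{n}})$. Since $\Q(\zeta_{p^{n}})/\Q$ is totally ramified at $p$, every intermediate extension is as well; in particular $F/\Q$ is totally ramified at $p$, so the unique prime of $F$ above $p$ has ramification index $[F:\Q] \geq 2$ over $\Q$. As $F \subseteq K$, it follows that $K/\Q$ is ramified at $p$, and moreover every prime $\frakp$ of $K$ above $p$ lies over this ramified prime of $F$, hence has ramification index over $\Q$ divisible by $[F:\Q] \geq 2$; in particular every such $\frakp$ is ramified in $K/\Q$.

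In case (a), $K$ is totally real and $p \geq 7$. Because $p > 5$, the exceptional hypothesis of Theorem \ref{real-not-HS} (the one concerning $p = 5$ and $[K(\zeta_{5}):K] = 2$) is vacuous, so Theorem \ref{real-not-HS} applies and shows that $K$ is not Hilbert-Speiser of type $C_{p}$, contradicting the assumption on $K$. In case (b), $K$ is totally imaginary and $p > [K:\Q]$; we have just seen that every prime of $K$ above $p$ is ramified, so hypothesis (a) of Theorem \ref{imag-not-HS} is satisfied and again $K$ is not Hilbert-Speiser of type $C_{p}$, a contradiction. Hence $K \cap \Q(\zeta_{p^{n}}) = \Q$ for all $n \geq 1$.

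As for obstacles, there are essentially none beyond this bookkeeping — which is exactly why the statement is flagged as following ``immediately''. The two points worth being careful about are: first, the totally real case genuinely requires $p \geq 7$ rather than merely $p \geq 5$, precisely so that the $\Q(\sqrt{5})$-type exceptions permitted in Theorem \ref{real-not-HS} cannot arise; and second, the condition $K \cap \Q(\zeta_{p^{n}}) \neq \Q$ must be translated into the ramification hypotheses of the two input theorems, which is done exactly via the total ramification of $\Q(\zeta_{p^{n}})/\Q$ at $p$ as above.
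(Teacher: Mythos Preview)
Your proof is correct and is exactly the argument the paper has in mind: it says only that the result follows ``immediately'' by combining Theorems \ref{real-not-HS} and \ref{imag-not-HS}(a), and your contrapositive argument via the total ramification of $\Q(\zeta_{p^{n}})/\Q$ at $p$ is precisely the intended (and only reasonable) way to fill in that one-line claim. The two care points you flag---the need for $p \geq 7$ in case (a) and the translation of $K \cap \Q(\zeta_{p^{n}}) \neq \Q$ into the ramification hypotheses---are exactly the details one must check.
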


\section{Realizable Classes}\label{realizable}

We briefly recall the work of McCulloh on realizable classes in the special case of 
cyclic extensions of prime degree (see \cite{mcculloh-abelian} for further details).

Let $K$ be a number field and let $p$ be a prime. 
Let $\Delta \simeq (\Z/p\Z)^{\times}$ be the group of automorphisms of $C_p$. 
Then the locally free class group $\Cl(\mathcal{O}_KC_p)$ is a $\Delta$-module. 
As $L/K$ varies over all tame $C_{p}$-Galois extensions of $K$, 
the class $(\mathcal{O}_L)$ of $\mathcal{O}_L$ varies over a subset $R(\mathcal{O}_KC_p)$ 
of $\Cl(\mathcal{O}_KC_p)$. This subset is in fact a subgroup which can be described explicitly.

Let $\Cl(\mathcal{O}_K)$ denote the ideal class group of $K$ and let $\Cl'(\mathcal{O}_KC_p)$ 
be the kernel of the map $\Cl(\mathcal{O}_KC_p) \longrightarrow \Cl(\mathcal{O}_K)$ induced by augmentation. 
Let $\mathcal{J}$ be the Stickelberger ideal in $\mathbb{Z}\Delta$ (the definition of  $\mathcal{J}$ will be given later).
The key result of relevance to the present paper is that $R(\mathcal{O}_KC_p)$ is the subgroup 
$\Cl'(\mathcal{O}_KC_p)^{\mathcal J}$ of $\Cl(\mathcal{O}_KC_p)$ where 
$\Cl'(\mathcal{O}_KC_p)^{\mathcal J}=\{c^{\alpha}: c\in \Cl'(\mathcal{O}_KC_p), \alpha \in {\mathcal J}\}$.

\section{The proof of Theorem \ref{real-not-HS}}

Let $K$ be a totally real number field and let $p \geq 5$ be prime. 
Let $\mathfrak{p}$ be some prime of $K$ above $p$ and let $e$ denote the 
ramification index of $\mathfrak{p}$ in $K/\Q$. We will assume that $p$ is
ramified in $K/\Q$ and so $\mathfrak{p}$ can be chosen such that $e \geq 2$.
Under these hypotheses we shall show that $K$ is not Hilbert-Speiser
of type $C_{p}$ (note that in the case $p=5$ and $[K(\zeta_{p}):K]=2$ we shall have to assume that $e \geq 3$).

The basic idea of the proof will be to construct certain $\mathcal{O}_{K}$-algebras
$\Gamma$ and $S$ such that $\Gamma \subseteq S$ with 
${S/\Gamma}\simeq {\mathcal{O}_K}/{\mathfrak p}$ as $\mathcal{O}_{K}$-modules. 
Together, $S$ and $\Gamma$ will be used to construct
a non-trivial subgroup of the realizable classes 
$R(\mathcal{O}_KC_p)=\Cl'(\mathcal{O}_KC_p)^{\mathcal J}$ described in Section
\ref{realizable}, thereby giving the desired result.
At all primes ${\mathfrak q}\neq {\mathfrak p}$ of $K$
the completions $S_{\mathfrak q}$ and $\Gamma_{\mathfrak q}$ will be equal, 
so the essential part of the argument will be local at ${\mathfrak p}$.

Let $\phi_p(z)$ be the $p$th cyclotomic polynomial. Then 
$\Gamma:=\mathcal{O}_{K}[z]/(\phi_p(z))$ is an $\mathcal{O}_K$-algebra,
but is a domain if and only if $[K(\zeta_p):K]=p-1$. The group 
$\Delta:=(\Z/p\Z)^{\times}$ acts on $\Gamma$ in the 
following way: to each $\bar{a} \in \Delta$ we associate an automorphism
$\sigma_{a}$ of $\Gamma$ defined by $\sigma_a(z)=z^a$, where 
the image of $z$ in $\Gamma$ is again written $z$.
Let $\omega :\Delta \rightarrow \Z_p^{\times}$ be the Teichm\"uller character, so that 
$\omega(\sigma_a)=\tilde{a}$ where $\tilde{a}^{p-1}=1$, and $\tilde{a}\equiv a\pmod p$.

There exists an element $\lambda$ such that 
${\mathbb Z}_p[\zeta_p]={\mathbb Z}_p[\lambda]$ with 
$\lambda^{p-1}=-p$, $\lambda \equiv 1-\zeta_p \pmod{(1-\zeta_p)^2}$
(see, for example, \cite[Chapter 14, Lemma 3.1]{lang-cyclo-I-and-II}). Furthermore,
$\Delta$ acts on $\lambda^i{\mathbb Z}_p$ through the character $\omega^i$ with 
$i\in\{0,\ldots, p-2\}$.
Note that $\Gamma_{\mathfrak p}=\mathcal{O}_{K_{\mathfrak p}} \otimes_{{\mathbb Z}_{p}} {\mathbb Z}_p[\zeta_p]$, for which 
$\{ 1, \lambda, \lambda^{2}, \ldots \lambda^{p-2}\}$ is an 
$\mathcal{O}_{K_{\mathfrak{p}}}$-basis .
Let $\pi$ denote a parameter of $\mathcal{O}_{K_{\mathfrak p}} $ and define
the element $x : = {\frac{1}{\pi}}\otimes{\lambda^{p-2}}$ in 
$K_{\mathfrak p} \otimes_{\Q_{p}}\Q_{p}(\zeta_{p})=K_{\mathfrak p}\Gamma$ (we will abuse notation and write $x=\frac{\lambda^{p-2}}{\pi}$).

\begin{lemma}\label{elements-lemma}
We have $x^2, x^3, \lambda x, \pi x \in {\Gamma}_{\mathfrak p}$.
\end{lemma}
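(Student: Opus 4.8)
The plan is to verify each of the four membership claims by a direct computation in $K_{\mathfrak p}\Gamma$, using the explicit $\mathcal O_{K_{\mathfrak p}}$-basis $\{1,\lambda,\ldots,\lambda^{p-2}\}$ of $\Gamma_{\mathfrak p}$ together with the single structural relation $\lambda^{p-1}=-p$. Write $e$ for the ramification index of $\mathfrak p$ in $K/\Q$, so that $p = u\pi^{e}$ for some unit $u\in\mathcal O_{K_{\mathfrak p}}^{\times}$; this is the only place the hypothesis $e\ge 2$ enters.

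First I would treat $\pi x$ and $\lambda x$, which are immediate: $\pi x = \pi\cdot\tfrac{1}{\pi}\otimes\lambda^{p-2} = 1\otimes\lambda^{p-2} = \lambda^{p-2}\in\Gamma_{\mathfrak p}$, and $\lambda x = \tfrac{1}{\pi}\otimes\lambda^{p-1} = \tfrac{1}{\pi}\otimes(-p) = -\tfrac{p}{\pi}\otimes 1 = -u\pi^{e-1}\otimes 1$, which lies in $\Gamma_{\mathfrak p}$ precisely because $e\ge 2$ forces $\pi^{e-1}\in\mathcal O_{K_{\mathfrak p}}$ (and $=1$ in the borderline relevance of $e=1$, which is excluded). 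Next, $x^{2} = \tfrac{1}{\pi^{2}}\otimes\lambda^{2p-4} = \tfrac{1}{\pi^{2}}\otimes\lambda^{p-1}\lambda^{p-3} = -\tfrac{p}{\pi^{2}}\otimes\lambda^{p-3} = -u\pi^{e-2}\otimes\lambda^{p-3}$. For $e\ge 2$ the exponent $e-2\ge 0$, so this is $-u\pi^{e-2}\lambda^{p-3}\in\Gamma_{\mathfrak p}$. Finally $x^{3} = \tfrac{1}{\pi^{3}}\otimes\lambda^{3p-6}$; since $3p-6 = 2(p-1) + (p-4)$ and $p\ge 5$ gives $p-4\ge 1$, we get $\lambda^{3p-6} = \lambda^{2(p-1)}\lambda^{p-4} = p^{2}\lambda^{p-4}$, whence $x^{3} = \tfrac{p^{2}}{\pi^{3}}\otimes\lambda^{p-4} = u^{2}\pi^{2e-3}\otimes\lambda^{p-4}$, and $2e-3\ge 1 > 0$ when $e\ge 2$, so $x^{3}\in\Gamma_{\mathfrak p}$.

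The computations are routine once the bookkeeping of exponents modulo $p-1$ is set up; the only subtlety — and the point most worth stating carefully — is the exponent arithmetic showing that the power of $\pi$ appearing is always nonnegative under the standing hypothesis $e\ge 2$ and $p\ge 5$. In particular $x^{3}$ is the tightest case ($2e-3\ge 1$), and it is the reason one cannot push to $x^{4}$: there $3(p-1)$ would only be reached for large $p$, and the $\pi$-exponent $3e-4$ can be negative when $e=1$. I expect no real obstacle here; the lemma is essentially a normalization lemma isolating exactly the products of $x$ that remain integral, which will later be used to build the algebra $S = \Gamma_{\mathfrak p} + \mathcal O_{K_{\mathfrak p}} x$ (glued to $\Gamma$ away from $\mathfrak p$) and to check $S$ is a ring with $S/\Gamma\simeq\mathcal O_K/\mathfrak p$.
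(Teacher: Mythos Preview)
Your proof is correct and follows essentially the same computation as the paper: both reduce each product via $\lambda^{p-1}=-p$ and check that the resulting power of $\pi$ is nonnegative using $e\ge 2$ (and $p\ge 5$ for $x^3$). Your version is slightly more explicit in writing $p=u\pi^{e}$, but the argument is identical; the closing remarks about $x^4$ are extraneous and a bit muddled (in fact $x^4\in\Gamma_{\mathfrak p}$ as well for $e\ge 2$, $p\ge 5$), but they do not affect the proof itself.
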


\begin{proof}
Since $e \geq 2$, we have $\frac{p}{\pi^{2}} \in \mathcal{O}_{K_{\mathfrak{p}}}$.
Hence
\[
x^2=\frac{\lambda^{2p-4}}{\pi^2}=\frac{-p\lambda^{p-3}}{\pi^2}
\quad \textrm{ and } \quad
x^3=\frac{(-p)^2\lambda^{p-4}}{\pi^3}
\]
are both in ${\Gamma}_{\mathfrak p}$ (we have used that $p \geq 5$ here). 
Furthermore, it is clear that
\[
\lambda x = \frac{\lambda^{p-1}}{\pi} = \frac{-p}{\pi}
\quad
\textrm{and}
\quad
\pi x = \lambda^{p-2}
\]
are both in ${\Gamma}_{\mathfrak p}$.
\end{proof}

We shall now consider three cases, the first two of which overlap.

\subsection{The case $[K(\zeta_{p}):K]>2$}

A consequence of Lemma \ref{elements-lemma} is that the 
$\mathcal{O}_{K_{\mathfrak{p}}}$-module 
$T:= \Gamma_{\mathfrak p}+x\mathcal{O}_{K_{\mathfrak p}} $ is in fact an 
$\mathcal{O}_{K_{\mathfrak p}}$-algebra. Furthermore, we have
\[
\pi T = \pi \Gamma_{\mathfrak p}+\lambda^{p-2}\mathcal{O}_{K_{\mathfrak p}}
\subseteq \Gamma_{\mathfrak p} \subseteq T
\]
since $\lambda^{p-2}$ is part of an $\mathcal{O}_{K_{\mathfrak p}}$-basis of $\Gamma_{\mathfrak p}$.
We now let $S$ be the 
$\mathcal{O}_K$-order defined by
\begin{eqnarray*}
S_{\mathfrak q} & = & \Gamma_{\mathfrak q} \quad ({\mathfrak q}\neq {\mathfrak p}); \\
S_{\mathfrak p} & = & T.
\end{eqnarray*}
We find that $\Gamma \subseteq S$ and $\pi S \subseteq \Gamma$ 
(note that we have abused notation in the obvious way here). Furthermore, the ring 
$\bar{S}:=S/\pi S$ is isomorphic to 
$S_{\mathfrak{p}}/\pi S_{\mathfrak{p}}=T/\pi T$.
Let $\bar{\Gamma}$ be the image of $\Gamma$ under the canonical map 
$S\rightarrow \bar{S}$. We have a Milnor square 
\[
\begin{array}{ccc}
\Gamma & \hookrightarrow & S \\
\downarrow &  & \downarrow \\
\bar{\Gamma}& \hookrightarrow &\bar{S}
\end{array}
\]
where the horizontal arrows are the natural inclusions and the vertical arrows are the natural projections (note that this is a special case of a fiber product). 
Note that we have $\delta(\lambda^{p-2}) \equiv \omega^{p-2}(\delta)\lambda^{p-2}$ modulo $p{\mathbb Z}_p[\zeta_p]$ for every $\delta\in \Delta$, so $\delta(x) \in x+\Gamma \subset S$. Hence $\Delta$ acts on $S$ and so acts on each of the rings in the Milnor square.
By \cite[p.242]{curtis-reiner-II} we have the following exact sequence
\[
\K_{1}(S) \times \K_{1}(\bar{\Gamma})
\longrightarrow 
\K_{1}(\bar{S})
\longrightarrow
\Cl(\Gamma)
\longrightarrow
\Cl(S)
\longrightarrow 0.
\]
As all the rings above are commutative, this becomes
\[
S^{\times} \times \bar{\Gamma}^{\times}
\longrightarrow 
\bar{S}^{\times}
\longrightarrow
\Cl(\Gamma)
\longrightarrow
\Cl(S)
\longrightarrow 0.
\]
Hence we have an embedding of $\Delta$-modules
\[
N:=\frac{\bar{S}^{\times}}{\bar{\Gamma}^{\times} \cdot {\rm im}(S^{\times})}
\hookrightarrow
\Cl(\Gamma),
\]
where ${\rm im}(S^{\times})$ is the image of $S^{\times}$ under the map 
$S\rightarrow \bar{S}$.

For every $\Delta$-module $X$, let $X^-$ and $X^{\omega^{-1}}$ denote the
minus part and $\omega^{-1}$-part of $\mathbb{Z}_p \otimes_{\mathbb{Z}} X$, respectively.
Then $X^{\omega^{-1}}\subseteq X^-$. We will show that $N^{\omega^{-1}}$ contains a submodule $M$ of order $p$. Note that by the definition of $x$ and the action of $\Delta$,
we have $x \in S^{\omega^{-1}}$. We define $\bar{x} \in \bar{S}$ to be the image of $x \in T$ under the natural projection $T \rightarrow T/\pi T \simeq \bar{S}$ and note that 
$\bar{x} \in \bar{S}^{\omega^{-1}}$.

Let $[{\rm exp}](z):=\sum_{i=0}^{p-1}\frac{1}{i!}z^i$ denote the truncated exponential series.  
Whenever the ideal $(a,b)$ generated by $a$ and $b$ satisfies $(a, b)^p=0$, we have $[{\rm exp}](a+b)=[{\rm exp}](a)\cdot [{\rm exp}](b)$ (see the proof of 
\cite[$p$-elementary group schemes---constructions and Raynaud's theory, Remark 1.1]{Hopf-poly-Raynaud}). Let $y:=[{\rm exp}](\bar{x})\in \bar{S}$.
Since $y^p=[{\rm exp}](p\bar{x})=[{\rm exp}](0)=1$, we have $y\in {\bar{S}}^{\times}$. 
We note that $y\notin \bar{\Gamma}$ (the summand with $i=1$ is $\bar x$ and hence plainly outside $\bar{\Gamma}$, and all other summands are in $\bar{\Gamma}$ by Lemma \ref{elements-lemma}). Moreover, as [exp] is compatible with the $\Delta$-action, we have $y\in (\bar{S}^{\times})^{\omega^{-1}} = \bar{S}^{\times} \cap \bar{S}^{\omega^{-1}}$. 

\begin{lemma}\label{omega-equality}
We have 
$(\bar{\Gamma}^{\times}\cdot {\rm im}(S^{\times}))^{\omega^{-1}}
=({\bar{\Gamma}^{\times}})^{\omega^{-1}}$.
\end{lemma}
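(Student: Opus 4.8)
The strategy is to reduce the identity to the single assertion that the image of $S^{\times}$ in $\bar{S}^{\times}$ is trivial in the $\omega^{-1}$-part, i.e. that $({\rm im}(S^{\times}))^{\omega^{-1}}$ is trivial; everything else is then formal. Granting this, the inclusion $\supseteq$ is immediate, since $\bar{\Gamma}^{\times}\sseq\bar{\Gamma}^{\times}\cdot{\rm im}(S^{\times})$ and passing to the $\omega^{-1}$-part is exact. For the inclusion $\sseq$, one uses that all the groups occurring are $\Delta$-stable subgroups of the \emph{finite} abelian group $\bar{S}^{\times}=T/\pi T$, so that after tensoring with $\mathbb{Z}_{p}$ they become $\Delta$-stable subgroups of the finite abelian $p$-group $P:=\mathbb{Z}_{p}\tensor_{\mathbb{Z}}\bar{S}^{\times}$, on which $e_{\omega^{-1}}:=\frac{1}{p-1}\sum_{\delta\in\Delta}\omega(\delta)\delta$ is a group homomorphism $P\to P^{\omega^{-1}}$. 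If $h$ lies in $(\bar{\Gamma}^{\times}\cdot{\rm im}(S^{\times}))^{\omega^{-1}}$, write $h=\bar{g}\bar{s}$ with $\bar{g}\in\bar{\Gamma}^{\times}$ and $\bar{s}\in{\rm im}(S^{\times})$, both taken in $P$; applying $e_{\omega^{-1}}$ gives $h=e_{\omega^{-1}}(h)=e_{\omega^{-1}}(\bar{g})e_{\omega^{-1}}(\bar{s})=e_{\omega^{-1}}(\bar{g})\in(\bar{\Gamma}^{\times})^{\omega^{-1}}$, using $e_{\omega^{-1}}(\bar{s})=1$.

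So it remains to prove $({\rm im}(S^{\times}))^{\omega^{-1}}=1$, and since $S^{\times}$ surjects onto ${\rm im}(S^{\times})$ it suffices to show $(\mathbb{Z}_{p}\tensor_{\mathbb{Z}}S^{\times})^{\omega^{-1}}=0$. I would split $S^{\times}$ into its group $\mu$ of roots of unity and a free complement; since $|\Delta|$ is prime to $p$, taking $\omega^{-1}$-parts is exact and compatible with this splitting, so it is enough to treat the two pieces separately. For the free part the key point---and the one place where the hypothesis that $K$ is totally real is used---is that $\sigma_{-1}\in\Delta$ acts trivially on $\mathbb{Q}\tensor S^{\times}$. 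Indeed, $A:=\Gamma\tensor_{\mathcal{O}_{K}}K\cong K[z]/(\phi_{p}(z))$ is a product of copies of $K(\zeta_{p})$, and because $K$ is totally real, complex conjugation lies in $\Gal(K(\zeta_{p})/K)$; hence $\sigma_{-1}$, which on the roots of $\phi_{p}$ is inversion, stabilises each field factor and acts there as complex conjugation. Consequently, for $u\in S^{\times}$ (so also $\sigma_{-1}(u)\in S^{\times}$, as $\Delta$ preserves $S$) the element $u/\sigma_{-1}(u)$ is a unit all of whose archimedean absolute values equal $1$, hence a root of unity by Kronecker's theorem; thus $\sigma_{-1}$ acts trivially on $S^{\times}/\mu$ and therefore on $\mathbb{Q}\tensor S^{\times}$. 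Since $\omega^{-1}(\sigma_{-1})=-1\neq1$ and $2$ is a unit of $\mathbb{Z}_{p}$, the $\omega^{-1}$-component of the free part of $\mathbb{Z}_{p}\tensor S^{\times}$, being torsion-free and killed by $2$, must vanish.

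For the torsion part, $\mu$ contains $\mu_{p}$ (the image of $z$ in $\Gamma$ is a unit), and its $p$-primary part $\mu_{p^{m}}$ is a cyclic $\mathbb{Z}_{p}[\Delta]$-module on which $\Delta$ acts through the mod-$p^{m}$ cyclotomic character, whose reduction mod $p$ is the character attached to $\omega$; hence $e_{\omega^{-1}}$ kills $\mathbb{Z}_{p}\tensor\mu_{p^{m}}$ unless $\omega^{-1}=\omega$, i.e. unless $p\le3$, which is excluded. Combining the two contributions would then yield $(\mathbb{Z}_{p}\tensor S^{\times})^{\omega^{-1}}=0$, completing the proof. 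The substantive step is the treatment of the free part; this is precisely where the totally real hypothesis is indispensable, in contrast to the totally imaginary case of Theorem~\ref{imag-not-HS}, where the corresponding vanishing fails. The remaining ingredients---the idempotent bookkeeping, the basic facts about unit groups of orders in products of number fields, and the $\Delta$-module structure of $\mu_{p^{m}}$---are routine, and one checks that nothing changes when $\phi_{p}$ is reducible over $K$, in which case $A$ is a non-trivial product of fields.
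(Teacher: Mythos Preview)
Your reduction to showing $(S^{\times})^{\omega^{-1}}=1$ and your treatment of the free part are correct and essentially coincide with the paper's approach: the paper embeds $S^{\times}$ in $\mathfrak{M}^{\times}$ (the maximal order) and uses that, since $K$ is totally real, complex conjugation lies in $\Delta_{0}=\Gal(K(\zeta_{p})/K)$, whence $(\mathfrak{M}^{\times})^{-}$ reduces to roots of unity. Your Kronecker-theorem argument is a direct version of the same fact.

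The gap is in your handling of the torsion. You assert that the $p$-primary part of $\mu\subseteq S^{\times}$ is a single cyclic group $\mu_{p^{m}}$ on which $\Delta$ acts through the cyclotomic character, and then conclude that its $\omega^{-1}$-part vanishes unless $\omega=\omega^{-1}$. This is only justified when $\Gamma$ is a domain, i.e.\ when $[K(\zeta_{p}):K]=p-1$. In general $K\Gamma$ is a product of $[\Delta:\Delta_{0}]$ copies of $K(\zeta_{p})$, and the $p$-primary torsion of $\mathfrak{M}^{\times}$ is not a rank-one $\Delta$-module with a single character but rather the induced module ${\rm ind}_{\Delta_{0}}^{\Delta}\langle\zeta_{p^{f}}\rangle$; the element $z$ sits diagonally inside it, and $\Delta$ permutes the factors nontrivially. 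Your closing remark that ``nothing changes when $\phi_{p}$ is reducible over $K$'' is exactly where the argument breaks.

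This is not a cosmetic issue: the hypothesis $[K(\zeta_{p}):K]>2$ of the ambient subsection is never invoked in your proof, yet it is precisely what makes the torsion contribution vanish. The paper computes, via Frobenius reciprocity, that $\bigl({\rm ind}_{\Delta_{0}}^{\Delta}\langle\zeta_{p^{f}}\rangle\bigr)^{\omega^{-1}}$ is nontrivial if and only if $\omega^{-1}|_{\Delta_{0}}=\omega|_{\Delta_{0}}$, i.e.\ if and only if $|\Delta_{0}|\le 2$. When $|\Delta_{0}|=2$ the $\omega^{-1}$-part of the roots of unity is genuinely nonzero, which is why in that case the paper proves only the weaker statement that $(S^{\times})^{\omega^{-1}}$ is cyclic (Lemma~\ref{cyclic}) and then argues differently. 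To repair your proof you must analyse the induced $\Delta$-module structure on the $p$-primary torsion and use $|\Delta_{0}|>2$; the condition $p\ge 5$ alone does not suffice.
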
 

\begin{proof}
Let ${\mathfrak M}$ denote the maximal order in $KS=K\Gamma$. Then 
${\mathfrak M}={\rm ind}_{\Delta_0}^{\Delta}\mathcal{O}_{K(\zeta_p)}$ with 
$\Delta_0=\Gal(K(\zeta_p)/K)$. 
We consider ${S^{\times}}^-\subseteq {{\mathfrak M}^{\times}}^-$; since $K$ is totally real, complex conjugation $j \in \Delta_{0}$ acts on each factor of $\mathfrak{M}$ separately, and we see that $\mathcal{O}_{K(\zeta_p)}^{\times -}$ is the multiplicative group of roots of unity $\langle \zeta_{{p}^{f}} \rangle$ for some $f \geq 1$ (see \cite[Theorem 4.12]{wash}).
Hence ${\mathfrak{M}^{\times}}^-={\rm ind}_{\Delta_0}^{\Delta}\langle \zeta_{p^{f}} \rangle$.

Suppose that $f=1$. Then $\Delta_0$ acts on $\zeta_p$ via $\omega \vert_{\Delta_0}$, 
and from the Frobenius reciprocity theorem one obtains that 
${\rm ind}_{\Delta_0}^{\Delta}\langle \zeta_p \rangle$ has non-trivial
$\omega^{-1}$-part if and only if ${\omega^{-1}}\vert_{\Delta_0}=\omega \vert_{\Delta_0}$, that is, if and only if $\omega^2$ is trivial on $\Delta_0$. But this is not the case since 
$[K(\zeta_{p}):K]=|\Delta_0|>2$ by hypothesis. 
Now suppose $f>1$. Then considering short exact sequence
\[
1 \longrightarrow {\rm ind}_{\Delta_0}^{\Delta}\langle \zeta_{p^{f-1}} \rangle
\longrightarrow {\rm ind}_{\Delta_0}^{\Delta}\langle \zeta_{p^f} \rangle
\longrightarrow {\rm ind}_{\Delta_0}^{\Delta}\langle \zeta_{p} \rangle
\longrightarrow 1,
\]
we see that the middle term has trivial $\omega^{-1}$-part if and only if 
the same is true of both the outer terms. It now follows by induction on $f$ that 
${\mathfrak{M}^{\times}}^-={\rm ind}_{\Delta_0}^{\Delta}\langle \zeta_{p^{f}} \rangle$ 
has trivial $\omega^{-1}$-part. Hence $(S^{\times})^{\omega^{-1}}$ is trivial, 
and the lemma is proved.
\end{proof}

Let $\bar{y}$ denote the projection of $y$ to $N$. If $\bar{y}$ were trivial
in $N$, then $y$ would have to be in $(\bar{\Gamma}^{\times}\cdot {\rm im}(S^{\times}))^{\omega^{-1}}=({\bar{\Gamma}^{\times}})^{\omega^{-1}}$. However, we have already
noted that $y$ is not even in $\bar{\Gamma}$. Hence $M:= \langle \bar{y} \rangle$ is a 
non-trivial $\Delta$-submodule of $N$ with $M^{\omega^{-1}}=M$.

\subsection{The case $e \geq 4$}

Let $x_{1}=x=\frac{\lambda^{p-2}}{\pi} = \frac{1}{\pi} \otimes \lambda^{p-2}$ be as above and define $x_{2}= \frac{\lambda^{p-2}}{\pi^{2}} = \frac{1}{\pi^{2}} \otimes \lambda^{p-2}$.

\begin{lemma}\label{elements-lemma2}
We have $x_{2}^2, x_{2}^3, \lambda x_{2}, \pi^2 x_{2}, x_{1}x_{2}, x_{1}^{2}x_{2}, 
x_{1}x_{2}^{2}  \in {\Gamma}_{\mathfrak p}$.
\end{lemma}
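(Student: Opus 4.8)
The plan is to mimic the proof of Lemma \ref{elements-lemma}: for each of the seven listed products I would use the relation $\lambda^{p-1}=-p$ repeatedly to rewrite it in the form $(\pm p^{a})\lambda^{r}/\pi^{m}$ with $0\leq r\leq p-2$, and then observe that, since $\{1,\lambda,\ldots,\lambda^{p-2}\}$ is an $\mathcal{O}_{K_{\mathfrak p}}$-basis of $\Gamma_{\mathfrak p}$, such an element lies in $\Gamma_{\mathfrak p}$ precisely when $p^{a}/\pi^{m}\in\mathcal{O}_{K_{\mathfrak p}}$, i.e.\ when $ae\geq m$. The point of the hypothesis $e\geq4$ is that it makes $p/\pi^{4}$ (and hence also $p/\pi^{3}$, $p/\pi^{2}$, $p/\pi$, $p^{2}/\pi^{6}$, $p^{2}/\pi^{5}$, $p^{2}/\pi^{4}$) lie in $\mathcal{O}_{K_{\mathfrak p}}$, which is exactly what is needed for all seven inequalities.

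Carrying this out explicitly, I would compute
\[
x_{2}^{2}=\frac{\lambda^{2p-4}}{\pi^{4}}=\frac{-p\,\lambda^{p-3}}{\pi^{4}},
\qquad
x_{2}^{3}=\frac{\lambda^{3p-6}}{\pi^{6}}=\frac{p^{2}\lambda^{p-4}}{\pi^{6}},
\]
which lie in $\Gamma_{\mathfrak p}$ since $e\geq4$ and $2e\geq6$ respectively; next $\lambda x_{2}=\lambda^{p-1}/\pi^{2}=-p/\pi^{2}$ and $\pi^{2}x_{2}=\lambda^{p-2}$, which are trivially in $\Gamma_{\mathfrak p}$; and finally
\[
x_{1}x_{2}=\frac{-p\,\lambda^{p-3}}{\pi^{3}},
\qquad
x_{1}^{2}x_{2}=\frac{p^{2}\lambda^{p-4}}{\pi^{4}},
\qquad
x_{1}x_{2}^{2}=\frac{p^{2}\lambda^{p-4}}{\pi^{5}},
\]
which are in $\Gamma_{\mathfrak p}$ since $e\geq3$, $2e\geq4$ and $2e\geq5$ respectively (all implied by $e\geq4$). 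As in Lemma \ref{elements-lemma}, the hypothesis $p\geq5$ is used only to ensure that the reduced exponents $p-3$ and $p-4$ lie in $\{0,\ldots,p-2\}$, so that $\lambda^{p-3}$ and $\lambda^{p-4}$ really are basis elements and no further reduction is required.

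I do not expect any genuine obstacle: the statement is a finite, completely explicit computation. The only point worth recording is which inequality on $e$ is binding, namely that it is $x_{2}^{2}$ whose denominator $\pi^{4}$ is absorbed by the single numerator factor $p=\lambda^{p-1}$ only when $e\geq4$; all six of the other elements already lie in $\Gamma_{\mathfrak p}$ under the weaker hypothesis $e\geq3$. This is precisely why this case is separated from the previous one, and, together with Lemma \ref{elements-lemma} and the fact that $\Gamma_{\mathfrak p}$ is an $\mathcal{O}_{K_{\mathfrak p}}$-algebra, it is exactly what I would expect to be used next to show that $\Gamma_{\mathfrak p}+x_{1}\mathcal{O}_{K_{\mathfrak p}}+x_{2}\mathcal{O}_{K_{\mathfrak p}}$ is an $\mathcal{O}_{K_{\mathfrak p}}$-subalgebra of $K_{\mathfrak p}\Gamma$ with $\pi^{2}$ times it contained in $\Gamma_{\mathfrak p}$, in analogy with the construction of $T$ in the previous subsection.
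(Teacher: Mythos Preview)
Your proposal is correct and matches the paper's own proof essentially line for line: the same reduction via $\lambda^{p-1}=-p$, the same seven explicit expressions, and the same appeal to $e\geq 4$ (with $p\geq 5$ ensuring the reduced exponents are nonnegative). Your additional remarks about which inequality is binding and about the subsequent construction of $T$ are accurate and indeed reflect how the paper proceeds.
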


\begin{proof}
We use that $p \geq 5$ without further mention.
Since $e \geq 4$, we have $\frac{p}{\pi^{4}} \in \mathcal{O}_{K_\mathfrak{p}}$.
Hence
\[
x_{2}^2=\frac{\lambda^{2p-4}}{\pi^4}=\frac{-p\lambda^{p-3}}{\pi^4}
\quad \textrm{ and } \quad
x_{2}^3=\frac{p^2\lambda^{p-4}}{\pi^6}
\]
are both in ${\Gamma}_{\mathfrak p}$.
Furthermore, it is clear that
\[
\lambda x_{2} = \frac{\lambda^{p-1}}{\pi^{2}} = \frac{-p}{\pi^{2}}
\quad
\textrm{and}
\quad
\pi^{2} x_{2} = \lambda^{p-2}
\]
are both in ${\Gamma}_{\mathfrak p}$.
Finally,
\begin{align*}
x_{1}x_{2} = \frac{\lambda^{2p-4}}{\pi^{3}} &= \frac{(-p)\lambda^{p-3}}{\pi^{3}},
\quad
x_{1}^{2}x_{2} = \frac{\lambda^{3p-6}}{\pi^{4}} = \frac{(-p)^{2}\lambda^{p-4}}{\pi^{4}},
\\
\textrm{and } \quad x_{1}x_{2}^{2} &= \frac{\lambda^{3p-6}}{\pi^{5}} = \frac{(-p)^{2}\lambda^{p-4}}{\pi^{5}}
\end{align*}
are all in $\Gamma_{\mathfrak{p}}$.
\end{proof}

A consequence of Lemmas \ref{elements-lemma} and \ref{elements-lemma2} is that
the $\mathcal{O}_{K_\mathfrak{p}}$-module 
$T:=\Gamma_{\mathfrak{p}}+x_{1}\mathcal{O}_{K_\mathfrak{p}}+x_{2}\mathcal{O}_{K_{\mathfrak{p}}}$ is in fact
an $\mathcal{O}_{K_{\mathfrak{p}}}$-algebra. Furthermore, we have
\[
\pi^{2}T = \pi^{2}\Gamma_{\mathfrak{p}} + \pi \lambda^{p-2}\mathcal{O}_{K_\mathfrak{p}}
+ \lambda^{p-2}\mathcal{O}_{K_\mathfrak{p}} 
= \pi^{2}\Gamma_{\mathfrak{p}} + \lambda^{p-2}\mathcal{O}_{K_\mathfrak{p}} 
\subseteq \Gamma_{\mathfrak{p}} \subseteq T
\]
since $\lambda^{p-2}$ is part of an $\mathcal{O}_{K_{\mathfrak{p}}}$-basis of $\Gamma_{\mathfrak{p}}$. 
We now let $S$ be the 
$\mathcal{O}_K$-order defined by
\begin{eqnarray*}
S_{\mathfrak q} & = & \Gamma_{\mathfrak q} \quad ({\mathfrak q}\neq {\mathfrak p}); \\
S_{\mathfrak p} & = & T.
\end{eqnarray*}
Then $\Gamma \subseteq S$ and $\pi^{2}S \subseteq \Gamma$.  
The same argument as in the previous case gives
an embedding of $\Delta$-modules
\[
N:=\frac{\bar{S}^{\times}}{\bar{\Gamma}^{\times} \cdot {\rm im}(S^{\times})}
\hookrightarrow
\Cl(\Gamma).
\]
By definition of $x_{1}, x_{2}$ and the action of $\Delta$, we have 
$x_{1}, x_{2} \in S^{\omega^{-1}}$. Let $y_{1} = [{\rm exp}](\bar{x}_{1})$, 
$y_{2} = [{\rm exp}](\bar{x}_{2}) \in \bar{S}=S/\pi^2 S$. As in the previous case,
both $y_{1}, y_{2}$ are elements of order $p$ in $(\bar{S}^{\times})^{\omega^{-1}}$.

\begin{lemma}\label{cyclic}
$(S^{\times})^{\omega^{-1}}$ is cyclic.
\end{lemma}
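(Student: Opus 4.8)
The plan is to prove that $(S^{\times})^{\omega^{-1}}$ is cyclic by bounding it inside the analogous part of the unit group of the maximal order ${\mathfrak M}$ of $KS=K\Gamma$, and showing that this latter group is cyclic; in particular the explicit description of $T$ will play no role. First, since $S\subseteq{\mathfrak M}$ we have $S^{\times}\subseteq{\mathfrak M}^{\times}$, and hence --- as $\Z_p$ is flat over $\Z$ and the $\omega^{-1}$-component is cut out by an idempotent of $\Z_p\Delta$ (recall $p\nmid|\Delta|$) --- an inclusion $(S^{\times})^{\omega^{-1}}\subseteq({\mathfrak M}^{\times})^{\omega^{-1}}$. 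So it is enough to show that $({\mathfrak M}^{\times})^{\omega^{-1}}$ is cyclic.

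As in the proof of Lemma \ref{omega-equality}, write ${\mathfrak M}={\rm ind}_{\Delta_0}^{\Delta}\mathcal{O}_{K(\zeta_p)}$ with $\Delta_0=\Gal(K(\zeta_p)/K)$. Since $K$ is totally real, $\mathcal{O}_{K(\zeta_p)}^{\times -}=\langle\zeta_{p^f}\rangle$ for some $f\geq1$ (see \cite[Theorem 4.12]{wash}), so $({\mathfrak M}^{\times})^{-}={\rm ind}_{\Delta_0}^{\Delta}\langle\zeta_{p^f}\rangle$, and since $\omega^{-1}$ is odd, $({\mathfrak M}^{\times})^{\omega^{-1}}=\bigl(({\mathfrak M}^{\times})^{-}\bigr)^{\omega^{-1}}$. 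If $[K(\zeta_p):K]>2$, the computation in the proof of Lemma \ref{omega-equality} shows that this $\omega^{-1}$-part is trivial, hence cyclic. If $[K(\zeta_p):K]=2$, write $\Delta_0=\{1,j\}$ with $j$ the restriction of complex conjugation; then $j$ acts on $\langle\zeta_{p^f}\rangle$ by inversion, that is, through $\omega^{-1}|_{\Delta_0}$, and a short Frobenius-reciprocity computation (equivalently, applying to ${\rm ind}_{\Delta_0}^{\Delta}\langle\zeta_{p^f}\rangle$ the idempotent of $\Z_p\Delta$ cutting out the $\omega^{-1}$-part) yields $({\mathfrak M}^{\times})^{\omega^{-1}}\simeq\langle\zeta_{p^f}\rangle$, which is cyclic. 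In both cases $({\mathfrak M}^{\times})^{\omega^{-1}}$, and therefore its subgroup $(S^{\times})^{\omega^{-1}}$, is cyclic.

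I do not expect a genuine obstacle: the argument is essentially a rerun of the proof of Lemma \ref{omega-equality}, the only new feature being that $[K(\zeta_p):K]$ is now allowed to equal $2$, in which case the $\omega^{-1}$-part of $({\mathfrak M}^{\times})^{-}$ is nonzero but still cyclic rather than trivial. The one point deserving a little care is the Frobenius-reciprocity bookkeeping when $[K(\zeta_p):K]=2$: since $p$ is odd, $\Z_p\Delta_0\cong\Z_p\times\Z_p$, and both $\Z_p\Delta_0$-modules entering the tensor product that computes $({\mathfrak M}^{\times})^{\omega^{-1}}$ are supported on the minus factor, so the product is cyclic of order $p^f$.
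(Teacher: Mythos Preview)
Your argument is correct and follows essentially the same approach as the paper: both embed $(S^{\times})^{\omega^{-1}}$ into $({\mathfrak M}^{\times})^{\omega^{-1}}$, identify ${\mathfrak M}^{\times -}={\rm ind}_{\Delta_0}^{\Delta}\langle\zeta_{p^f}\rangle$, and then deduce cyclicity. The only difference is cosmetic: where you split into the cases $[K(\zeta_p):K]>2$ (trivial by Lemma~\ref{omega-equality}) and $[K(\zeta_p):K]=2$ (explicit Frobenius reciprocity giving $\langle\zeta_{p^f}\rangle$), the paper argues uniformly that ${\rm ind}_{\Delta_0}^{\Delta}\langle\zeta_{p^f}\rangle$ is cyclic over $\Z_p[\Delta]$, hence its $\omega^{-1}$-component is cyclic over $\Z_p(\omega^{-1})=\Z_p$.
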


\begin{proof}
Let ${\mathfrak M}$ denote the maximal order in $KS=K\Gamma$. 
As $S \subseteq \mathfrak{M}$, we have 
${(S^{\times}})^{\omega^{-1}} \subseteq {({\mathfrak M}^{\times}})^{\omega^{-1}}$,
and so it suffices to show that $({{\mathfrak M}^{\times}})^{\omega^{-1}}$ is a cyclic group.

By the same argument as for Lemma \ref{omega-equality}, we obtain ${\mathfrak{M}^{\times}}^-={\rm ind}_{\Delta_0}^{\Delta}\langle \zeta_{p^{f}} \rangle$.
Furthermore, as $\omega^{-1}$ is an odd character, we have 
$({\mathfrak M}^{\times})^{\omega^{-1}} \subseteq {{\mathfrak M}^{\times}}^-$.
Now $\langle \zeta_{p^{f}} \rangle$ is trivially cyclic as a $\Z_{p}[\Delta_{0}]$-module;
hence 
\[ 
{\mathfrak{M}^{\times}}^-={\rm ind}_{\Delta_0}^{\Delta}\langle \zeta_{p^{f}} \rangle = 
\Z_{p}[\Delta] \otimes_{\Z_{p}[\Delta_{0}]}\langle \zeta_{p^{f}} \rangle
\]
is cyclic as a $\Z_{p}[\Delta]$-module. Thus $({\mathfrak{M}^{\times}})^{\omega^{-1}} =
\Z_{p}(\omega^{-1}) \otimes_{\Z_{p}[\Delta]} {\mathfrak{M}^{\times}}^-$
is cyclic as a $\Z_{p}(\omega^{-1})$-module, where $\Z_{p}(\omega^{-1})$ is the ring extension of $\Z_{p}$ obtained by adjointing the image of $\omega^{-1}$. However, $\omega^{-1}$ takes its values in $\Z_{p}^{\times}$, and so $\Z_{p}(\omega^{-1})=\Z_{p}$. Therefore
$({\mathfrak M}^{\times})^{\omega^{-1}}$ is cyclic as a $\Z_{p}$-module, and hence is cyclic 
as a group.
\end{proof}

Let $\tilde{y}_{1}, \tilde{y}_{2} \in 
(\bar{S}^{\times}/\bar{\Gamma}^{\times})^{\omega^{-1}}$ be the images
of $y_{1}, y_{2}$ under the natural projection. Since $y_{1}, y_{2} \notin \bar{\Gamma}$ 
(the summand with $i=1$ is outside $\bar{\Gamma}$ and all others are in $\bar{\Gamma}$ by Lemmas \ref{elements-lemma} and \ref{elements-lemma2}), $\tilde{y}_{1}, \tilde{y}_{2}$ are also each 
of order $p$. 
Suppose that 
$\tilde{y}_{1}^{k_{1}}\tilde{y}_{2}^{k_{2}}$ 
is trivial for some $k_{1}, k_{2} \in \{ 1, 2, \ldots, p-1\}$. 
This would mean that $[{\rm exp}](k_{1}\bar{x}_{1}+k_{2}\bar{x}_{2})$ is in $\bar \Gamma$.
By virtue of Lemma \ref{elements-lemma2}, we  have $[{\rm exp}](k_{1}\bar{x}_{1}+k_{2}\bar{x}_{2})
\equiv 1+k_{1}\bar{x}_{1}+k_{2}\bar{x}_{2}$ modulo $\bar{\Gamma}$. Therefore we would obtain
$k_{1}\bar{x}_{1}+k_{2}\bar{x}_{2} \in \bar{\Gamma}$ and so
$k_{1}x_{1}+k_{2}x_{2} \in \Gamma_{\mathfrak{p}}$, which is impossible. 
Hence the subgroup 
$\langle \tilde{y}_{1}, \tilde{y}_{2} \rangle \simeq \langle y_{1}, y_{2} 
\rangle \simeq \Z/p\Z \times \Z/p\Z$ is non-cyclic. 
Let $\bar{y}_{1}, \bar{y}_{2}$ be the projections of $\tilde{y}_{1}, \tilde{y}_{2}$ to $N$
and let $M := \langle \bar{y}_{1}, \bar{y}_{2} \rangle \subseteq N^{\omega^{-1}}$.
Note that $M$ is non-trivial because $({\rm im}(S^{\times}))^{\omega^{-1}}$ is cyclic by Lemma
\ref{cyclic}, but $\langle \tilde{y}_{1}, \tilde{y}_{2} \rangle$ is non-cyclic. Hence $M$ is a non-trivial
$\Delta$-submodule of $N$ with $M^{\omega^{-1}}=M$.

\subsection{The case $[K(\zeta_{p}):K]=2$ and  $e=2$ or $3$}

Note that the condition $[K(\zeta_{p}):K]=2$ implies that $\frac{p-1}{2}$ divides $e$.
Hence we are reduced to considering the cases $p=5$ and $p=7$
(since $p \geq 11$ forces $e \geq 5$). If $p=5$, then $e$ must be even
and so in fact $e=2$. However, this case is excluded by hypothesis. If $p=7$, then
we must have $e=3$. In this case, we let $x_{1}=\frac{\lambda^{5}}{\pi}$, 
$x_{2}=\frac{\lambda^{5}}{\pi^{2}}$ and $x_{3}=\frac{\lambda^{4}}{\pi}$. It is straightforward 
to check that the $\mathcal{O}_{K_{\mathfrak{p}}}$-module 
$T:=\Gamma_{\mathfrak{p}}+x_{1}\mathcal{O}_{K_{\mathfrak{p}}}+x_{2}\mathcal{O}_{K_{\mathfrak{p}}}
+x_{3}\mathcal{O}_{K_{\mathfrak{p}}}$ is in fact an $\mathcal{O}_{K_{\mathfrak{p}}}$-algebra.
The result is then given by a slight variant of the proof of
the previous case (note that $x_{1}, x_{2} \in S^{\omega^{-1}}$ but $x_{3} \notin S^{\omega^{-1}}$).

\subsection{The proof of Theorem \ref{real-not-HS}}

In each of the above cases, we have shown that there exists a non-trivial $\Delta$-submodule
$M$ of $\Cl(\Gamma)$ such that $M^{\omega^{-1}}=M$.

\begin{proof}[Proof of Theorem \ref{real-not-HS}]

Recall that the Stickelberger ideal is defined to be ${\mathcal J}={\mathbb Z}\Delta\cap \theta\cdot {\mathbb Z}\Delta={\rm Ann}_{\Delta}(\langle \zeta_p \rangle ) \cdot \theta$ where $\theta$ is the Stickelberger element $\frac{1}{p}\sum_{j=1}^{p-2}j\sigma_j^{-1}$. Let ${\mathcal J}_p\subseteq {\mathbb Z}_p\Delta$ be the $p$-completion  of $\mathcal J$. Then $\omega^{-1}({\mathcal J}_p)=\omega^{-1}({\rm Ann}_{\Delta}(\langle \zeta_p \rangle))\cdot {\omega}^{-1}\theta$. The second factor of the last expression is the generalized Bernoulli number $B_{1, \omega}$. Since $p\geq 5$, the first factor  is ${\mathbb Z}_p$. 
By \cite[Corollary 5.15]{wash} we have
$B_{1, \omega}\equiv \frac{B_2}{2}=\frac{1}{12} \pmod p$. Hence, $M^{\mathcal J}=M^{\omega^{-1}({\mathcal J}_p)}=M^{{\mathbb Z}_p}=M$ and therefore
$\Cl(\Gamma)^{\mathcal{J}} \neq 0$.

Let $\Sigma$ denote the sum of the elements of $C_p$. 
Consider the following Milnor square
\[
\begin{array}{ccc}
\mathcal{O}_KC_p &\stackrel{\alpha}{ \longrightarrow} & \mathcal{O}_KC_p/\mathcal{O}_K\Sigma =: \Lambda \\
\downarrow {\scriptstyle \beta} &  & \downarrow {\scriptstyle \gamma}  \\
\mathcal{O}_K&\longrightarrow &\mathcal{O}_K/p\mathcal{O}_K,
\end{array}
\]
where the horizontal maps are the natural projections, $\beta$ is the augmentation
map, and $\gamma$ is the map induced by augmentation. The resulting map 
\[
\Cl(\mathcal{O}_KC_p)\stackrel{(\alpha, \beta)}{\longrightarrow}\Cl(\Lambda)\times \Cl(\mathcal{O}_K)
\]
is surjective (see, for instance, \cite[Corollary 49.28]{curtis-reiner-II}). 
It follows immediately that
\[
\Cl'(\mathcal{O}_KC_p) \longrightarrow \Cl(\Lambda)
\]
is surjective. However, $\Cl(\Lambda) \simeq \Cl(\Gamma)$ since
$\Lambda \simeq \Gamma$, and so $\Cl(\Lambda)^{\mathcal{J}} \neq 0$.
Therefore $\Cl'(\mathcal{O}_KC_p)^{\mathcal J}=R(\mathcal{O}_KC_p) \neq 0$,
and so $K$ is not a Hilbert-Speiser field of type $C_p$.
\end{proof}

\section{Acknowledgments}

The authors are grateful to the Deutscher Akademischer Austausch Dienst (German Academic Exchange Service) for a grant allowing the second named author to visit the first for the 2006-07 academic year, thus making this collaboration possible. 

The authors are indebted to James E. Carter for suggesting the original problem,
to Nigel P. Byott for pointing out an error in an earlier version of this paper, and to
both the aforementioned and the referee for several helpful comments and suggestions. 

\bibliography{not-p-HS-Bib}{}
\bibliographystyle{amsalpha}

 \end{document}